\theoremstyle{plain}
\newtheorem{thm}{\protect\theoremname}
  \theoremstyle{plain}
  \newtheorem{lem}[thm]{\protect\lemmaname}
  \theoremstyle{plain}
  \newtheorem{cor}[thm]{\protect\corollaryname}
 \newcommand\thmsname{\protect\theoremname}
 \newcommand\nm@thmtype{theorem}
 \theoremstyle{plain}
 \newenvironment{namedthm}[1][Undefined Theorem Name]{
   \ifx{#1}{Undefined Theorem Name}\renewcommand\nm@thmtype{theorem*}
   \else\renewcommand\thmsname{#1}\renewcommand\nm@thmtype{namedtheorem}
   \fi
   \begin{\nm@thmtype}}
   {\end{\nm@thmtype}}
  \providecommand{\corollaryname}{Corollary}
  \providecommand{\lemmaname}{Lemma}
  \providecommand{\theoremname}{Theorem}
\providecommand{\theoremname}{Theorem}
\begin{document}
\global\long\def\ltnorm#1#2{\left\Vert #1\right\Vert _{L^{2}#2}}
\global\long\def\norm#1{\left\Vert #1\right\Vert }
\global\long\def\bbR{\mathbb{R}}
\global\long\def\bbC{\mathbb{C}}
\global\long\def\bbT{\mathbb{T}}
\global\long\def\bbZ{\mathbb{Z}}
\global\long\def\bbN{\mathbb{N}}
\global\long\def\eps{\varepsilon}
\global\long\def\summ#1#2{\underset{#1}{\overset{#2}{\sum}}}
\global\long\def\rint#1#2{\underset{#1}{\overset{#2}{\int}}}

\let\thefootnote\relax\footnotetext{The research is supported in part by the Israel Science Foundation}
\let\thefootnote\relax\footnotetext{}

\title{Riesz sequences and arithmetic progressions}

\author{Itay Londner and Alexander olevski\u{I}}

\address{School of Mathematical Sciences, Tel-Aviv University, Tel-Aviv 69978,
Israel.}

\email{itaylond@post.tau.ac.il }

\email{olevskii@post.tau.ac.il }
\begin{abstract}
Given a set $\mathcal{S}$ of positive measure on the circle and a
set of integers $\Lambda$, one may consider the family of exponentials
$E\left(\Lambda\right):=\left\{ e^{i\lambda t}\right\} _{\lambda\in\Lambda}$
and ask whether it is a Riesz sequence in the space $L^{2}\left(\mathcal{S}\right)$. 

We focus on this question in connection with some arithmetic properties
of the set of frequencies.

Improving a result of Bownik and Speegle (\cite{MR2270922}, Thm.
4.16), we construct a set $\mathcal{S}$ such that $E\left(\Lambda\right)$
is never a Riesz sequence if $\Lambda$ contains arbitrarily long arithmetic
progressions of length $N$ and step $\ell=O\left(N^{1-\eps}\right)$.
On the other hand, we prove that every set $\mathcal{S}$ admits a
Riesz sequence $E\left(\Lambda\right)$ such that $\Lambda$ does
contain arbitrarily long arithmetic progressions of length $N$ and
step $\ell=O\left(N\right)$.
\end{abstract}

\subjclass[2010]{Primary 42C15; Secondary 42A38}

	\keywords{Riesz sequences, Arithmetic progressions}

\maketitle

\section{Introduction}

We use below the following notation:

$\Lambda$ - A set of integers.

$\mathcal{S}$ - A set of positive measure on the circle $\bbT$.

$\left|\mathcal{S}\right|$ - The Lebesgue measure of $\mathcal{S}$. 

For $A,B\subset\bbR$, $x\in\bbR$ we let 
\[
A+B:=\left\{ \alpha+\beta\,|\,\alpha\in A\,,\,\beta\in B\right\} \;,\; x\cdot A:=\left\{ x\cdot\alpha\,|\,\alpha\in A\right\} \,.
\]

A sequence of elements in a Hilbert space $\left\{ \varphi_{i}\right\} _{i\in I}\subset\mathcal{H}$
is called a $Riesz$ $sequence$ (RS) if there are positive constants
$c,\, C$ s.t. the inequalities 
\begin{align*}
c\underset{i\in I}{\sum}\left|a_{i}\right|^{2} & \leq\left\Vert \underset{i\in I}{\sum}a_{i}\varphi_{i}\right\Vert ^{2}\leq C\underset{i\in I}{\sum}\left|a_{i}\right|^{2}\,,
\end{align*}
hold for every finite sequence of scalars $\left\{ a_{i}\right\} _{i\in I}$.

Given $\Lambda\subset\bbZ$ we denote 
\[
\ensuremath{E\left(\Lambda\right):=\left\{ e^{i\lambda t}\right\} _{\lambda\in\Lambda}}\,.
\]

\begin{doublespace}
The following result is classical (see \cite{MR0107776}, p.203, Lemma
6.5):\\
\emph{If $\Lambda=\left\{ \lambda_{n}\right\} _{n\in\mathbb{N}}\subset\bbZ$
is lacunary in the sense of Hadamard, i.e. satisfies 
\[
\frac{\lambda_{n+1}}{\lambda_{n}}\geq q>1\;,\; n\in\bbN
\]
then $E\left(\Lambda\right)$ forms a RS in $L^{2}\left(S\right)$,
for every $\mathcal{S}\subset\bbT$ with $\left|\mathcal{S}\right|>0$.}

The following generalization is due to I.M. Miheev (\cite{miheev1975lacunary},
Thm. 7):\\
\emph{If $E\left(\Lambda\right)$ is an $S_{p}-system$ for some $p>2$,
i.e. satisfies 
\[
\norm{\underset{\lambda\in\Lambda}{\sum}a_{\lambda}e^{i\lambda t}}_{L^{p}\left(\bbT\right)}\leq C\norm{\underset{\lambda\in\Lambda}{\sum}a_{\lambda}e^{i\lambda t}}_{L^{2}\left(\bbT\right)},
\]
with some $C>0$, for every finite sequence of scalars $\left\{ a_{\lambda}\right\} _{\lambda\in\Lambda}$,}
\emph{then it forms a RS in $L^{2}\left(\mathcal{S}\right)$, for
every $\mathcal{S}\subset\bbT$ with $\left|\mathcal{S}\right|>0$.}

J. Bourgain and L. Tzafriri proved the following result as a consequence
of their ``restricted invertibility theorem'' (\cite{MR890420},
Thm. 2.2):\\
\emph{Given }$\mathcal{S}\subset\bbT$,\emph{ there is a RS $E\left(\Lambda\right)$
s.t. $\Lambda$ is a set of integers with positive asymptotic density
\[
dens\,\Lambda:=\underset{N\rightarrow\infty}{lim}\frac{\#\left\{ \Lambda\cap\left[-N,N\right]\right\} }{2N}>C\left|\mathcal{S}\right|\,.
\]
}(Here and below $C$ denotes positive absolute constants, which might
be different from one another).

W. Lawton (\cite{lawton2010minimal}, Cor. 2.1), \emph{assuming the
Feichtinger conjecture for exponentials}, proved the following proposition:\\
\emph{$\left(L\right)$ For every $\mathcal{S}$ there is a RS $E\left(\Lambda\right)$
s.t. the set of frequencies $\Lambda\subset\bbZ$ is syndetic, that
is $\Lambda+\left\{ 0,\ldots,n-1\right\} =\bbZ$ for some $n\in\bbN$.}
\end{doublespace}

Recall that the Feichtinger conjecture says that every bounded frame in a
Hilbert space can be decomposed in a finite family of RS. This claim
turned out to be equivalent to the Kadison-Singer conjecture (see
\cite{MR2277219}). The last conjecture has been proved recently by
A. Marcus, D. Spielman and N. Srivastava (see \cite{marcus2013interlacing}),
so proposition $\left(L\right)$ holds unconditionally. 

Notice that in some results above the system $E\left(\Lambda\right)$
serves as RS for all sets $\mathcal{S}$; however the set of frequencies
$\Lambda$ is quite sparse there. In others $\Lambda$ is rather dense
but it works for $\mathcal{S}$ given in advance.\\
It was asked in \cite{MR2439002} whether one can somehow combine
the density and $"\mbox{universality}"$ properties. It turned out
this is indeed possible. A set $\Lambda\subset\bbR$ has been constructed
in that paper such that the exponential system $E\left(\Lambda\right)$ forms a RS in $L^{2}\left(\mathcal{S}\right)$
for any open set $\mathcal{S}$ of a given measure, and the set of
frequencies has optimal density, proportional to $\left|\mathcal{S}\right|$.
This is not true for nowhere dense sets (see \cite{MR2439002}).

\section{Results}

In this paper we consider sets of frequencies $\Lambda$ which contain
arbitrarily long arithmetic progressions. Below we denote the length
of a progression by $N$, by $\ell$ we denote its step. Given $\Lambda$
which contains arbitrarily long arithmetic progressions there exists
a set $\mathcal{S}\subset\bbT$ of positive measure so that $E\left(\Lambda\right)$
is not a RS in $L^{2}\left(\mathcal{S}\right)$ (see \cite{miheev1975lacunary}).\\
In the case $\ell$ grows slowly with respect to $N$, one can define
$\mathcal{S}$ independent of $\Lambda$. \\
A quantitative version of such a result was proved in \cite{MR2270922}:\\
\emph{There exists a set $\mathcal{S}$ such that $E\left(\Lambda\right)$
is not a RS in $L^{2}\left(\mathcal{S}\right)$ whenever $\Lambda$ contains arithmetic progressions of length $N_{j}$ ($N_{1}<N_{2}<\ldots$) and the corresponding step $\ell_{j}=o\left(N_{j}^{\nicefrac{1}{2}}log^{-3}N_{j}\right)$.}

The proof is based on some estimates of the discrepancy of sequences
of the form $\left\{ \alpha k\right\} _{k\in\bbN}$ on the circle.

Using a different approach we prove a stronger result:
\begin{thm}
There exists a set $\mathcal{S}\subset\bbT$ such that if a set $\Lambda\subset\bbZ$ contains arithmetic progressions of length $N$(=$N_{1}<N_{2}<N_{3}<\ldots$) with corresponding step  $\ell=O\left(N^{\alpha}\right)$, $\alpha<1$, then $E\left(\Lambda\right)$ is not a RS in $L^{2}\left(\mathcal{S}\right)$.
\end{thm}
Here one can construct $\mathcal{S}$ not depending on $\alpha$ and
with arbitrarily small measure of the complement.

The next theorem shows that the result is sharp.
\begin{thm}
Given a set $\mathcal{S}\subset\bbT$ of positive measure, there is
a set of frequencies $\Lambda\subset\bbZ$ so that:
\begin{enumerate}
\item [(i)] For infinitely many $N$'s $\Lambda$ contains an arithmetic progression of length $N$ and step $\ell=O\left(N\right)$.
\item [(ii)] The system of exponentials $E\left(\Lambda\right)$\emph{
}forms a RS in $L^{2}\left(\mathcal{S}\right)$. 
\end{enumerate}
\end{thm}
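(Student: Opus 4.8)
The plan is to build $\Lambda$ as a union of long blocks of full arithmetic progressions placed at a lacunary (or nearly lacunary) sequence of scales, so that the arithmetic-progression requirement (i) is satisfied automatically by the blocks, while the Riesz property (ii) is obtained by controlling the interaction between distinct blocks. Fix $N_k \to \infty$ rapidly and, inside the $k$-th scale, take a progression $P_k = \{a_k, a_k+\ell_k, \dots, a_k+(N_k-1)\ell_k\}$ with $\ell_k = O(N_k)$; then $\Lambda := \bigcup_k P_k$. Condition (i) holds with $N = N_k$. For (ii) I would first recall that the Bourgain--Tzafriri restricted invertibility theorem (the result quoted in the introduction from \cite{MR890420}) guarantees that for the given $\mathcal{S}$ there is a set $\Lambda_0 \subset \bbZ$ of positive density with $E(\Lambda_0)$ a Riesz sequence in $L^2(\mathcal{S})$. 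The key observation is that a set of positive density necessarily contains arbitrarily long arithmetic progressions only in the Szemer\'edi sense, which does not give the quantitative control on $\ell$ that we need; so instead I would use restricted invertibility more carefully, applied scale by scale to extract, inside each dyadic-type window $[M_k, 2M_k]$ with $M_k$ growing fast, a long arithmetic progression $P_k$ of step $\ell_k = O(N_k)$ on which $E(P_k)$ is uniformly a Riesz sequence in $L^2(\mathcal{S})$ with constants independent of $k$.

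Concretely, the heart of the argument is the single-block estimate: for each $\delta>0$ there is a constant $c(\delta,|\mathcal{S}|)>0$ such that every sufficiently long progression $P$ of step $\ell \le \delta^{-1} |P|$ contains a sub-progression $P'$ with $|P'| \ge c|P|$ for which $E(P')$ is a Riesz sequence in $L^2(\mathcal{S})$ with lower bound bounded below uniformly. This should follow by rescaling: the exponentials $\{e^{i(a+j\ell)t}\}_{j}$ restricted to $\mathcal{S}$ are, after the change of variable $s = \ell t$, exponentials $\{e^{ijs}\}$ restricted to the ``wrapped'' set $\ell \cdot \mathcal{S} \pmod{2\pi}$, which (for $\ell$ large) has measure close to $|\mathcal{S}|$ by equidistribution; then apply Bourgain--Tzafriri to this near-fixed set. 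Taking $N_k$ large enough at each stage and $P_k$ the sub-progression produced this way (and re-indexing so that the full progression of length $N_k$ is literally present — this forces us to keep all of $P_k$, not a sub-progression, which is the delicate point below), we assemble $\Lambda = \bigcup_k P_k$.

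The remaining step is to pass from "each $E(P_k)$ is uniformly a Riesz sequence" to "$E(\bigcup_k P_k)$ is a Riesz sequence", i.e.\ to control cross terms between blocks. If the scales $M_k$ are separated lacunarily, the Fourier transform of $\mathbbm{1}_{\mathcal{S}}$ decays enough that the Gram matrix of $E(\Lambda)$ on $L^2(\mathcal{S})$ is a small perturbation of the block-diagonal matrix, and a Schur-test / almost-orthogonality argument closes the estimate. This is where the tension with requirement (i) lives: (i) demands the \emph{entire} progression $P_k$ of length $N_k$ with step $O(N_k)$, so we cannot thin $P_k$ to kill bad interactions — we must instead choose $N_{k+1}$ enormous compared to everything already selected so that block $k+1$ sees block $k$ only through the rapidly decaying tail of $\widehat{\mathbbm{1}_{\mathcal{S}}}$. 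I expect \textbf{the main obstacle} to be precisely this reconciliation: showing that a full (unthinned) progression of step $\asymp N$ can be forced to be Riesz in $L^2(\mathcal{S})$ — the rescaling-plus-equidistribution idea only directly yields a Riesz \emph{sub}-progression, so one needs an extra argument (e.g.\ choosing $\ell_k$ along a subsequence where $\ell_k \cdot \mathcal{S}$ equidistributes especially well, or absorbing the defect into the inter-block perturbation) to upgrade it to the full block. Once that is handled, a standard inductive choice of $N_k, M_k$ and a final almost-orthogonality estimate complete the proof.
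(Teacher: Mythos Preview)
Your global architecture (blocks of full progressions, then almost-orthogonality to glue translated blocks) is exactly the paper's. The cross-block step you sketch is the paper's Lemma~8. The gap is in the single-block step, and you have located it yourself: you need each full progression $P_k$ of length $N_k$ and step $\ell_k=O(N_k)$ to be, on its own, a Riesz sequence in $L^2(\mathcal S)$ with a uniform lower bound, and nothing you propose delivers that.

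The restricted-invertibility route cannot close this. Bourgain--Tzafriri, applied to any frame of exponentials, returns only a \emph{subset} (here a sub-progression) of positive density; it gives no control over which subset, and there is no mechanism to upgrade it to the whole progression. Your rescaling idea also breaks: writing $s=\ell t$ turns $\int_{\mathcal S}\bigl|\sum_j c_j e^{ij\ell t}\bigr|^2\,dt$ into $\int_{\mathbb T} w(s)\bigl|\sum_j c_j e^{ijs}\bigr|^2\,ds$ with a weight $w\ge 0$, $\int w = |\mathcal S|$; it is \emph{not} the indicator of a set of measure $\approx |\mathcal S|$ (generically the support of $w$ is nearly all of $\mathbb T$ for large $\ell$), so ``equidistribution of $\ell\cdot\mathcal S$'' does not reduce you to a fixed set on which to invoke restricted invertibility. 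Finally, ``absorbing the defect into the inter-block perturbation'' cannot help either: the defect is intra-block and has nothing to do with the separation of scales.

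The paper avoids all of this with a direct Gram-matrix estimate. Take the specific block $B_n=\{n,2n,\dots,n^2\}$ (so $\ell=N=n$). The off-diagonal Gram entries on $L^2(\mathcal S)$ are $\widehat{\mathbbm 1_{\mathcal S}}(\lambda-\mu)$ with $\lambda-\mu\in\{n,2n,\dots,(n-1)n\}$. Since $\sum_m|\widehat{\mathbbm 1_{\mathcal S}}(m)|^2\le |\mathcal S|$ and the blocks $\{p,2p,\dots,p^2\}$ for distinct primes $p$ are pairwise disjoint, divergence of $\sum_p 1/p$ forces infinitely many $n$ with
\[
\sum_{k=1}^{n}\bigl|\widehat{\mathbbm 1_{\mathcal S}}(kn)\bigr|^2<\frac{\varepsilon}{n},
\qquad\text{hence}\qquad
\sum_{\substack{\lambda,\mu\in B_n\\ \lambda\neq\mu}}\bigl|\widehat{\mathbbm 1_{\mathcal S}}(\lambda-\mu)\bigr|^2<2\varepsilon.
\]
Cauchy--Schwarz then gives a lower Riesz bound $|\mathcal S|-O(\sqrt\varepsilon)$ for $E(B_n)$, uniformly over these $n$. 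This is the missing idea: instead of thinning the progression, \emph{choose} the step $n$ along a sparse set where the Fourier tail of $\mathbbm 1_{\mathcal S}$ along multiples of $n$ is tiny. With that in hand, your gluing step finishes the proof.
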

Increasing slightly the bound for $\ell$, one can get a version of
Theorem 2 which admits a progression of any length:
\begin{thm}
Given $\mathcal{S}$ one can find $\Lambda$ with the property (ii)
above and s.t. 
\begin{enumerate}
\item [(i')] For every $\alpha>1$ and for every $N\in\bbN$ it contains an arithmetic
progression of length $N$ and step $\ell<C\left(\alpha\right) N^{\alpha}$.
\end{enumerate}
\end{thm}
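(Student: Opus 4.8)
The plan is to bootstrap from Theorem 2. Theorem 2 already produces, for a given $\mathcal{S}$, a set $\Lambda$ which is a Riesz sequence in $L^{2}\left(\mathcal{S}\right)$ and contains arithmetic progressions of every length $N$ with step bounded by $O\left(N\right)$. The only deficiency relative to (i') is that in Theorem 2 the progression of length $N$ may be forced to have a step as large as a fixed constant multiple of $N$; we want, for each prescribed $\alpha>1$, a progression of \emph{every} length $N$ with step below $C\left(\alpha\right)N^{\alpha}$. Since $N^{\alpha}$ eventually dominates $CN$, the bound in (i') is genuinely weaker than the one in (i) only for small $N$, so the issue is purely about accommodating \emph{short} progressions inside a Riesz sequence without destroying the Riesz property — and this should come essentially for free.

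Concretely, I would run the construction behind Theorem 2 but interleave additional short progressions. First I would fix $\alpha>1$ and, for each $N$, reserve a ``slot'' far out along the integers — a window $I_{N}$ placed around some large integer $M_{N}$, with the windows $I_{N}$ pairwise disjoint and $M_{N}\to\infty$ very fast. Inside $I_{N}$ I would plant an arithmetic progression $P_{N}$ of length $N$ and step $\ell_{N}$ chosen just small enough that $\ell_{N}<C\left(\alpha\right)N^{\alpha}$; because we are free to take $M_{N}$ as large as we like, we can always fit such a $P_{N}$ inside a window whose location is essentially unconstrained. The candidate set is then $\Lambda:=\Lambda_{0}\cup\bigcup_{N}P_{N}$, where $\Lambda_{0}$ is the set from Theorem 2 (or one could build $\Lambda$ from scratch in the same spirit, but reusing Theorem 2 is cleaner).

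The heart of the argument is to verify that adjoining the $P_{N}$ keeps $E\left(\Lambda\right)$ a Riesz sequence in $L^{2}\left(\mathcal{S}\right)$. The natural mechanism is a separation/perturbation estimate: if the blocks $P_{N}$ are placed far enough apart and far enough from $\Lambda_{0}$, the Gram matrix of $E\left(\Lambda\right)$ is a small perturbation of a block-diagonal operator whose blocks are (a) the Gram matrix of $E\left(\Lambda_{0}\right)$ and (b) the Gram matrices of the finite systems $E\left(P_{N}\right)$. Off-diagonal interactions are governed by the Fourier coefficients $\widehat{\mathbbm{1}_{\mathcal{S}}}$ evaluated at differences of frequencies lying in distinct blocks; by the Riemann–Lebesgue lemma these decay, and by choosing the gaps between blocks to grow fast enough (say $M_{N+1}$ exceeding a tower built from $M_{1},\dots,M_{N}$ and from $\#P_{N}=N$) one can make the total off-block mass summably small, so a Schur-test / diagonal-dominance argument gives uniform lower and upper frame bounds. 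The one genuine subtlety — and the step I expect to be the main obstacle — is that each finite block $E\left(P_{N}\right)$ must itself be uniformly well-conditioned in $L^{2}\left(\mathcal{S}\right)$, with a lower Riesz bound independent of $N$; this is not automatic for an arbitrary arithmetic progression in an arbitrary $\mathcal{S}$ (it is exactly the obstruction exploited in Theorems 1 and 2), so one must \emph{choose} the step $\ell_{N}$ (and possibly dilate the block appropriately) so that $P_{N}$ falls into the ``good'' regime — precisely the regime the construction of Theorem 2 identifies — while still respecting $\ell_{N}<C\left(\alpha\right)N^{\alpha}$. Once that uniform lower bound for the individual blocks is secured, the perturbation argument closes and both (i') and (ii) hold.
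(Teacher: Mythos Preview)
Your proposal has a genuine gap, and it stems from a misreading of Theorem 2. Theorem 2 does \emph{not} furnish a progression of \emph{every} length $N$ with step $O(N)$; it only produces progressions of lengths $n$ ranging over some infinite set $\mathcal{N}$ (in the proof, a subset of the primes) with completely uncontrolled gaps. The paper says this explicitly in the sentence introducing Theorem 3: one must ``increase slightly the bound for $\ell$'' precisely in order to ``admit a progression of any length.'' So your claim that ``the bound in (i') is genuinely weaker than the one in (i) only for small $N$'' is false: for a length $N$ lying in a gap of $\mathcal{N}$, the only progression of length $N$ inside $\Lambda_0$ that you can exhibit is a sub-progression of some block of length $n\in\mathcal{N}$ with $n\geq N$, and its step is $\leq Cn$, which need not be $\leq C(\alpha)N^{\alpha}$ for any $\alpha$.

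You correctly isolate the real obstacle --- obtaining, for \emph{every} $N$, a step $\ell_N<C(\alpha)N^{\alpha}$ for which the length-$N$ block $E(P_N)$ has a lower Riesz bound independent of $N$ --- but you do not solve it; pointing back to ``the regime the construction of Theorem 2 identifies'' is circular, because that construction only handles infinitely many $N$. The paper supplies the missing idea: apply the divisor bound $d(n)=o(n^{\varepsilon})$ to the double sum $\sum_{\ell\leq L}\sum_{n\leq N}a(n\ell)\leq\sum_{k\leq LN}d(k)a(k)<(LN)^{\varepsilon}$ with $a(n)=|\widehat{\mathbbm{1}_{\mathcal{S}}}(n)|^{2}$, and pigeonhole over $\ell\in\{1,\dots,L\}$ with $L\approx N^{\alpha}$ to find, for every large $N$, a good step $\ell_{\alpha,N}<N^{\alpha}$ with $\sum_{n\leq N}a(n\ell_{\alpha,N})<N^{-1-\varepsilon(\alpha)}$. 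That is the new ingredient; once you have it, your block-separation/perturbation plan (which is exactly the paper's Lemma 8) finishes the proof via a diagonal process over $\alpha_k\downarrow 1$.
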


\section{Proof of theorem 1}
\begin{proof}
Fix $\eps>0$. Take a decreasing sequence of positive numbers $\left\{ \delta\left(\ell\right)\right\} _{\ell\in\bbN}$
s.t. 
\[
\begin{array}{ccc}
\left(a\right) & \summ{\ell\in\bbN}{}\delta\left(\ell\right)<\frac{\eps}{2}\\
\left(b\right) & {\displaystyle \delta\left(\ell\right)\cdot\ell^{\nicefrac{1}{\alpha}}\underset{\ell\rightarrow\infty}{\longrightarrow}\infty} & {\displaystyle \;\forall\alpha\in\left(0,1\right)}
\end{array}
\]

For every $\ell\in\mathbb{N}$ set $I_{\ell}=\left(-\delta\left(\ell\right),\delta\left(\ell\right)\right)$
and let $\tilde{I_{\ell}}$ be the $2\pi$-periodic extension of $I_{\ell}$,
i.e. 
\[
\tilde{I_{\ell}}=\underset{k\in\bbZ}{\bigcup}\left(I_{\ell}+2\pi k\right)\,.
\]
We define 
\begin{equation}
I_{\left[\ell\right]}=\left(\frac{1}{\ell}\cdot\tilde{I_{\ell}}\right)\cap\left[-\pi,\pi\right]\mbox{ and }\mathcal{S}=\bbT\backslash\underset{\ell\in\mathbb{N}}{\bigcup}I_{\left[\ell\right]}=\left(\underset{\ell\in\mathbb{N}}{\bigcup}I_{\left[\ell\right]}\right)^{c}\,,
\end{equation}
whence we get that 
\[
\left|\mathcal{S}\right|\geq1-\underset{\ell=1}{\overset{\infty}{\sum}}\left|I_{\left[\ell\right]}\right|=1-\underset{\ell=1}{\overset{\infty}{\sum}}2\delta\left(\ell\right)>1-\eps\,.
\]
Fix $\alpha<1$ and let $\Lambda\subset\bbZ$ be such that one can
find arbitrarily large $N\in\bbN$ for which 
\[
\left\{ M+\ell,\ldots,M+N\cdot\ell\right\} \subset\Lambda\,,
\]
with some $M=M\left(N\right)\in\bbZ$, $\ell=\ell\left(N\right)\in\bbN$
and 
\begin{equation}
\ell<C\left(\alpha\right) N^{\alpha}\,.
\end{equation}

Recall that by $\left(1\right)$ we have $t\in I_{\left[\ell\right]}$
if and only if $t\ell\in\tilde{I_{\ell}}\cap\left[-\pi\ell,\pi\ell\right]$.
Since $\mathcal{S}$ lies inside the complement of $I_{\left[\ell\right]}$
we get 
\[
\rint{\mathcal{S}}{}\left|\summ{k=1}Nc\left(k\right)e^{i\left(M+k\ell\right)t}\right|^{2}\frac{dt}{2\pi}\leq\rint{I_{\left[\ell\right]}^{^{c}}}{}\left|\summ{k=1}Nc\left(k\right)e^{i\left(M+k\ell\right)t}\right|^{2}\frac{dt}{2\pi}=
\]
\[
=\rint{\left[-\pi\ell,\pi\ell\right]\backslash\tilde{I_{\ell}}}{}\left|\summ{k=1}Nc\left(k\right)e^{ik\tau}\right|^{2}\frac{d\tau}{2\pi\ell}=\rint{I_{\ell}^{^{c}}}{}\left|\summ{k=1}Nc\left(k\right)e^{ik\tau}\right|^{2}\frac{d\tau}{2\pi}\,.
\]
Therefore, in order to complete the proof, it is enough to show that
$\left\Vert \summ{k=1}Nc\left(k\right)e^{ik\tau}\right\Vert _{L^{2}\left(I_{\ell}^{^{c}}\right)}$
can be made arbitrarily small while keeping $\summ{k=1}N\left|c\left(k\right)\right|^{2}$
bounded away from zero. This observation allows us to reformulate
the problem as a norm concentration problem of trigonometric polynomials
of degree $N$ on the interval $I_{\ell}$.

Let $P_{N}\left(t\right)=\frac{1}{\sqrt{N}}\summ{k=1}Ne^{ikt}$,
so $\norm{P_{N}}_{L^{2}\left(\bbT\right)}=1$. Moreover, for every
$t\in\bbT$ we have $\left|P_{N}\left(t\right)\right|\leq\frac{1}{\sqrt{N}\sin\frac{t}{2}}$,
hence 
\[
\underset{I_{\ell}^{^{c}}}{\int}\left|P_{N}\left(t\right)\right|^{2}\frac{dt}{2\pi}\leq\frac{1}{N}\rint{\delta\left(\ell\right)}{\pi}\frac{dt}{\sin^{2}\frac{t}{2}}<\frac{C}{N}\rint{\delta\left(\ell\right)}{\pi}\frac{dt}{t^{2}}<\frac{C}{\delta\left(\ell\right)N}<\frac{C\left(\alpha\right)}{\delta\left(\ell\right)\ell^{\nicefrac{1}{\alpha}}}\,,
\]
where last inequality holds for every $N$ for which $\left(2\right)$
holds. Using condition $\left(b\right)$ we see that indeed last term
can be made arbitrarily small, and so $E\left(\Lambda\right)$ is not
a RS in $L^{2}\left(\mathcal{S}\right)$.

\end{proof}

\section{Proof of theorem 2}

For $n\in\bbN$ we define 
\[
B_{n}:=\left\{ n,\,2n,\ldots,n^{2}\right\} \,.
\]

\begin{lem}
Let $\mathcal{P}$ be the set of all prime numbers. Then the blocks
$\left\{ B_{p}\right\} _{p\in\mathcal{P}}$ are pairwise disjoint.\end{lem}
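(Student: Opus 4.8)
The statement is a short arithmetic fact, so the proof will be brief. Write each block as $B_n=\{kn:1\le k\le n\}$, so that every element of $B_n$ lies in the range $[n,n^2]$ and is a multiple of $n$ with cofactor at most $n$. The plan is to take two distinct primes $p<q$, assume for contradiction that some integer $x$ lies in $B_p\cap B_q$, and derive a divisibility contradiction from the bound on the cofactor.

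\textbf{Key steps.} First, fix distinct primes $p$ and $q$ and assume without loss of generality $p<q$. Suppose $x\in B_p\cap B_q$; then $x=kp=mq$ for some integers $k,m$ with $1\le k\le p$ and $1\le m\le q$. Second, observe that $q\mid kp$; since $q$ is prime and $q\nmid p$ (as $p,q$ are distinct primes), it follows that $q\mid k$. Third, combine this with the range bound: $1\le k\le p<q$, so $k$ is a positive integer strictly smaller than $q$, hence cannot be divisible by $q$. This contradiction shows $B_p\cap B_q=\emptyset$, and since $p,q$ were arbitrary distinct primes, the blocks $\{B_p\}_{p\in\mathcal P}$ are pairwise disjoint.

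\textbf{Main obstacle.} There is essentially no obstacle here; the only thing to be careful about is bookkeeping the roles of $p$ and $q$ — specifically that the cofactor $k$ attached to the \emph{smaller} prime is bounded by that smaller prime, which is what makes $q\mid k$ impossible. No estimates or auxiliary constructions are needed, and this lemma will feed into the construction of $\Lambda$ in the proof of Theorem 2 by guaranteeing that the arithmetic-progression blocks indexed by primes do not interfere with one another.
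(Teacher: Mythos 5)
Your proof is correct and follows essentially the same argument as the paper: write a common element as a multiple of each prime, deduce that $q$ must divide the cofactor of $p$, and contradict the bound $k\le p<q$. The only difference is notational (the paper uses $m$ for the cofactor of $p$), and your version is, if anything, slightly more explicit about invoking primality of $q$ via Euclid's lemma.
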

\begin{proof}
Let $p<q$ be prime numbers. Notice that a number $a\in B_{p}\cap B_{q}$
if and only if there exist $1\leq m\leq p$ and $1\leq k\leq q$ s.t.
\[
a=mp=kq\,,
\]
which is possible only if $q$ divides $m$. But since $m<q$ this
cannot happen and so such $a$ does not exist.\end{proof}
\begin{lem}
Let $\left\{ a\left(n\right)\right\} _{n\in\bbN}$ a sequence of non-negative
numbers s.t. $\underset{n=1}{\overset{\infty}{\sum}}a\left(n\right)\leq1$.
Then for every $\eps>0$ there exist infinitely many $n\in\bbN$ s.t.
\[
\underset{\ell=1}{\overset{n}{\sum}}a\left(\ell n\right)<\frac{\eps}{n}\,.
\]
\end{lem}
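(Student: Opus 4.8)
The plan is to argue by contradiction and to feed the arithmetic information of Lemma~4 into the classical divergence of the sum of reciprocals of primes. Suppose the conclusion fails for some $\eps>0$. Then only finitely many $n$ satisfy $\sum_{\ell=1}^{n}a(\ell n)<\eps/n$, so there is $N_{0}\in\bbN$ with
\[
\sum_{\ell=1}^{n}a(\ell\cdot n)\;\geq\;\frac{\eps}{n}\qquad\text{for every }n\geq N_{0}.
\]

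Next I would restrict attention to the values $n=p$ with $p$ prime and $p\geq N_{0}$. For such a $p$ the arguments appearing on the left-hand side are precisely $p\cdot 1,p\cdot 2,\ldots,p\cdot p$, i.e.\ exactly the block $B_{p}=\{p,2p,\ldots,p^{2}\}$, so the inequality above becomes $\sum_{m\in B_{p}}a(m)\geq\eps/p$. By Lemma~4 the blocks $\{B_{p}\}_{p\in\mathcal{P}}$ are pairwise disjoint, hence summing over all primes $p\geq N_{0}$ and using non-negativity of $a$ together with $\sum_{n}a(n)\leq 1$ yields
\[
\eps\sum_{\substack{p\in\mathcal{P}\\ p\geq N_{0}}}\frac{1}{p}\;\leq\;\sum_{\substack{p\in\mathcal{P}\\ p\geq N_{0}}}\;\sum_{m\in B_{p}}a(m)\;=\;\sum_{m\,\in\,\bigcup_{p\geq N_{0}}B_{p}}a(m)\;\leq\;\sum_{n=1}^{\infty}a(n)\;\leq\;1 .
\]

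Finally I would invoke the fact that $\sum_{p\in\mathcal{P}}1/p=\infty$; deleting the finitely many primes below $N_{0}$ does not affect the divergence, so the left-hand side above is infinite, a contradiction. Hence there must be infinitely many $n$ with $\sum_{\ell=1}^{n}a(\ell n)<\eps/n$, which is the assertion. I do not expect a genuine obstacle: the only point is to realize that the relevant ``multiplicity'' in counting the arguments $m=\ell n$ is not to be controlled by a crude dyadic estimate (which loses a logarithmic factor and would be too weak) but rather by the \emph{exact} disjointness of the blocks $B_{p}$ already isolated in Lemma~4, and that this pairs with a harmonic-type sum over primes that diverges. Once the problem is phrased through prime steps, the estimate is immediate.
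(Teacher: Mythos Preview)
Your argument is correct and is essentially the paper's own proof: both use the disjointness of the blocks $B_{p}$ from Lemma~4 to bound $\sum_{p}\sum_{\ell=1}^{p}a(\ell p)$ by $\sum_{n}a(n)\leq 1$, and then derive a contradiction from the divergence of $\sum_{p\in\mathcal{P}}1/p$. The only cosmetic difference is that the paper writes the disjoint-union inequality first and then assumes the contrary, while you set up the contradiction first; the content is identical.
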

\begin{proof}
By Lemma 4 we may write 
\[
\summ{n=1}{\infty}a\left(n\right)\geq\underset{p\in\mathcal{P}}{\sum}\summ{\ell=1}pa\left(\ell p\right)\,.
\]
Assuming the contrary for some $\eps$, i.e. for all but finitely
many $p\in\mbox{\ensuremath{\mathcal{P}}}$ we have $\summ{\ell=1}pa\left(\ell p\right)\geq\frac{\eps}{p}$,
we get a contradiction to the well-known fact that $\underset{p\in\mathcal{P}}{\sum}\frac{1}{p}=\infty$.\end{proof}
\begin{cor}
For every $\eps>0$ there exist infinitely many $n\in\bbN$ s.t. 
\begin{equation}
\underset{\underset{\mu<\lambda}{\lambda,\mu\in B_{n}}}{\sum}a\left(\lambda-\mu\right)<\eps\,.
\end{equation}
\end{cor}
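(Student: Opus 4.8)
The plan is to reduce the double sum over pairs in $B_{n}$ to a single sum of the type handled by Lemma 6, and then quote that lemma verbatim.

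First I would unwind the structure of $B_{n}=\left\{ n,2n,\ldots,n^{2}\right\}$: every element is of the form $jn$ with $1\le j\le n$, so a pair $\mu<\lambda$ in $B_{n}$ is $\mu=in$, $\lambda=jn$ with $1\le i<j\le n$, and $\lambda-\mu=(j-i)n$. Sorting the pairs by the value $k=j-i\in\left\{ 1,\ldots,n-1\right\}$, there are exactly $n-k$ pairs producing the difference $kn$, hence
\[
\underset{\underset{\mu<\lambda}{\lambda,\mu\in B_{n}}}{\sum}a\left(\lambda-\mu\right)=\sum_{k=1}^{n-1}(n-k)\,a(kn)\le n\sum_{k=1}^{n-1}a(kn)\le n\sum_{\ell=1}^{n}a(\ell n)\,,
\]
where in the last two steps I simply bound $n-k\le n$ and add the extra non‑negative term $a(n\cdot n)$.

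Next I would apply Lemma 6 with $a=\left\{ a(n)\right\}$ and the given $\eps$: it supplies infinitely many $n\in\bbN$ with $\sum_{\ell=1}^{n}a(\ell n)<\eps/n$. For each such $n$ the displayed chain yields $\underset{\underset{\mu<\lambda}{\lambda,\mu\in B_{n}}}{\sum}a\left(\lambda-\mu\right)<n\cdot(\eps/n)=\eps$, which is the asserted inequality $(3)$. There is no real obstacle here — the entire analytic content (the divergence of $\sum 1/p$ together with the disjointness of the blocks $B_{p}$) was already extracted in Lemmas 4–6; the corollary is pure bookkeeping, the only mildly delicate point being the identification of the multiplicities $n-k$ and the observation that discarding them via $n-k\le n$ costs a factor that Lemma 6 already compensates for through its $1/n$ gain.
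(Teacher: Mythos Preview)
Your proof is correct and essentially identical to the paper's: both identify the differences $\lambda-\mu$ as $kn$ with multiplicity $n-k$, bound $n-k\le n$, and invoke the preceding lemma to obtain $n\sum_{\ell=1}^{n}a(\ell n)<\eps$. One minor point: in the paper's numbering the lemma you cite is Lemma~5, not Lemma~6 (theorems, lemmas and corollaries share a counter, and this corollary is itself number~6).
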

\begin{proof}
Every $\mu<\lambda$ from $B_{n}$ must take the form 
\[
\begin{array}{c}
\lambda=kn\\
\mu=k'n
\end{array},\,1\leq k'<k\leq n,
\]
hence $\lambda-\mu=\ell n$, $\ell\in\left\{ 1,2,\ldots n-1\right\} $.
From Lemma 5 we get for infinitely many $n\in\bbN$

\[
\underset{\underset{\mu<\lambda}{\lambda,\mu\in B_{n}}}{\sum}a\left(\lambda-\mu\right)=\summ{\ell=1}n\left(n-\ell\right) a\left(\ell n\right)\leq n\summ{\ell=1}na\left(\ell n\right)<\eps\,
\]

\end{proof}
Given $B\subset\bbR$, we say that a positive number $\gamma$ is a
lower Riesz bound (in $L^{2}\left(\mathcal{S}\right)$) for the sequence
$E\left(B\right)$ if the inequality 
\[
\norm{\summ{\lambda\in B}{}c\left(\lambda\right)e^{i\lambda t}}_{L^{2}\left(\mathcal{S}\right)}^{2}\geq\gamma\summ{\lambda\in B}{}\left|c\left(\lambda\right)\right|^{2}\,,
\]
holds for every finite sequence of scalars $\left\{ c\left(\lambda\right)\right\} _{\lambda\in B}$.
\begin{lem}
Given $\mathcal{S}\subset\bbT$ of positive measure, there exist a
constant $\gamma=\gamma\left(\mathcal{S}\right)>0$ for which the
following holds: For infinitely many $n\in\bbN$ $\gamma$ is a lower
Riesz bound \emph{(}in $L^{2}\left(\mathcal{S}\right)$\emph{)} for
$E\left(B_{n}\right)$.\end{lem}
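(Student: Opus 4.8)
The plan is to read the lower Riesz bound as a lower bound for the smallest eigenvalue of the Gram matrix of $E(B_n)$ in $L^2(\mathcal{S})$, and to make that bound uniform over an infinite family of $n$ by controlling the off-diagonal part of the Gram matrix via Corollary 6.

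For $k\in\bbZ$ put $a_k:=\rint{\mathcal{S}}{}e^{-ikt}\,\frac{dt}{2\pi}$, the Fourier coefficients of $\mathbf{1}_{\mathcal{S}}$; then $a_{-k}=\overline{a_k}$, $a_0=|\mathcal{S}|>0$, and by Parseval $\summ{k\in\bbZ}{}|a_k|^2=\norm{\mathbf{1}_{\mathcal{S}}}_{L^2(\bbT)}^2=a_0\le 1$. For a finite scalar sequence $\{c(\lambda)\}_{\lambda\in B_n}$ one expands
\[
\norm{\summ{\lambda\in B_n}{}c(\lambda)e^{i\lambda t}}_{L^2(\mathcal{S})}^2=\summ{\lambda,\mu\in B_n}{}c(\lambda)\overline{c(\mu)}\,a_{\mu-\lambda}=a_0\summ{\lambda\in B_n}{}|c(\lambda)|^2+R,
\]
where $R:=\sum_{\lambda\neq\mu}c(\lambda)\overline{c(\mu)}\,a_{\mu-\lambda}$ gathers the off-diagonal terms. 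Bounding $|c(\lambda)||c(\mu)||a_{\mu-\lambda}|$ termwise and applying Cauchy--Schwarz (using $\sum_{\lambda,\mu}|c(\lambda)|^2|c(\mu)|^2=\bigl(\sum_\lambda|c(\lambda)|^2\bigr)^2$) gives
\[
|R|\le\left(\summ{\lambda\neq\mu}{}|a_{\mu-\lambda}|^2\right)^{1/2}\summ{\lambda\in B_n}{}|c(\lambda)|^{2}\,.
\]

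Next I would apply Corollary 6 with the weights $a(k):=|a_k|^2$, which are nonnegative and satisfy $\summ{k=1}{\infty}a(k)\le\summ{k\in\bbZ}{}|a_k|^2\le 1$: for any prescribed $\eps>0$ there are infinitely many $n\in\bbN$ with $\sum_{\mu<\lambda\in B_n}|a_{\lambda-\mu}|^2<\eps$, and since $|a_{-k}|=|a_k|$ the full off-diagonal sum obeys $\sum_{\lambda\neq\mu\in B_n}|a_{\mu-\lambda}|^2<2\eps$ for those $n$. Choosing $\eps$ once and for all so small that $\sqrt{2\eps}\le a_0/2$ (e.g. $\eps=a_0^2/16$, depending only on $\mathcal{S}$), we get $|R|\le\tfrac{a_0}{2}\sum_\lambda|c(\lambda)|^2$ for the infinitely many $n$ supplied by Corollary 6, hence
\[
\norm{\summ{\lambda\in B_n}{}c(\lambda)e^{i\lambda t}}_{L^2(\mathcal{S})}^2\ge\frac{a_0}{2}\summ{\lambda\in B_n}{}|c(\lambda)|^2
\]
for those $n$; so $\gamma:=a_0/2=|\mathcal{S}|/2$ is the desired lower Riesz bound.

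The single delicate point is the passage from Corollary 6 to the estimate for $R$. Corollary 6 requires a summable weight sequence, and the ``natural'' weight $|a_k|$ need not be summable for a general measurable set $\mathcal{S}$; one is forced to use $|a_k|^2$ instead (summable precisely because $\mathbf{1}_{\mathcal{S}}\in L^2$) and, correspondingly, to control $R$ only through a Hilbert--Schmidt / Cauchy--Schwarz inequality rather than through diagonal dominance ($\ell^1$ control). This suffices here because the blocks $B_n$ are arranged so that every nonzero difference of two of their elements is a multiple of $n$ with multiplicity at most $n$ --- exactly the arithmetic input that Corollary 6 converts into smallness. Everything else (the Gram identity, the symmetry $|a_{-k}|=|a_k|$, and the choice of $\eps$) is bookkeeping.
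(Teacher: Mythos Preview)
Your proof is correct and follows essentially the same route as the paper: expand the $L^2(\mathcal{S})$-norm via the Gram matrix, bound the off-diagonal part by Cauchy--Schwarz, and feed the sequence $a(k)=|\widehat{\mathbf{1}_{\mathcal{S}}}(k)|^2$ into Corollary~6 to make that part small for infinitely many $n$; the resulting constant $\gamma=|\mathcal{S}|/2$ is also the paper's choice. Your explicit commentary on why one must use $|a_k|^2$ rather than $|a_k|$ (since only the former is summable for a general measurable $\mathcal{S}$) is a nice addition.
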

\begin{proof}
Let $\mathcal{S}\subset\bbT,$ with $\left|\mathcal{S}\right|>0$.
Applying Corollary 6 to the sequence $\left\{ a\left(n\right)\right\} _{n\in\bbN}:=\left\{ \left|\widehat{\mathbbm{1_{\mathcal{S}}}}\left(n\right)\right|^{2}\right\} _{n\in\bbN}$
(where $\mathbbm{1_{\mathcal{S}}}$ is the indicator function of the
set $\mathcal{S}$), we get for every $\eps>0$ infinitely many $n\in\bbN$
for which $\left(3\right)$ holds. We write
\[
\underset{\mathcal{S}}{\int}\left|\underset{\lambda\in B_{n}}{\sum}c\left(\lambda\right)e^{i\lambda t}\right|^{2}\frac{dt}{2\pi}=\underset{\mathcal{S}}{\int}\left(\underset{\lambda\in B_{n}}{\sum}\left|c\left(\lambda\right)\right|^{2}+\underset{\underset{\lambda\neq\mu}{\lambda,\mu\in B_{n}}}{\sum}c\left(\lambda\right)\overline{c\left(\mu\right)}e^{i\left(\lambda-\mu\right)t}\right)\frac{dt}{2\pi}=
\]
\[
=\left|\mathcal{S}\right|\underset{\lambda\in B_{n}}{\sum}\left|c\left(\lambda\right)\right|^{2}+\underset{\underset{\lambda\neq\mu}{\lambda,\mu\in B_{n}}}{\sum}c\left(\lambda\right)\overline{c\left(\mu\right)}\widehat{\mathbbm{1}_{\mathcal{S}}}\left(\mu-\lambda\right)\,.
\]
By Cauchy-Schwarz inequality, the last term does not exceed
\[
\left|\underset{\underset{\lambda\neq\mu}{\lambda,\mu\in B_{n}}}{\sum}c\left(\lambda\right)\overline{c\left(\mu\right)}\widehat{\mathbbm{1}_{\mathcal{S}}}\left(\mu-\lambda\right)\right|\leq\left(\underset{\lambda,\mu\in B_{n}}{\sum}\left|c\left(\lambda\right)\overline{c\left(\mu\right)}\right|^{2}\right)^{\nicefrac{1}{2}}\left(\underset{\underset{\lambda\neq\mu}{\lambda,\mu\in B_{n}}}{\sum}\left|\widehat{\mathbbm{1}_{\mathcal{S}}}\left(\mu-\lambda\right)\right|^{2}\right)^{\nicefrac{1}{2}}=
\]
\[
=\underset{\lambda\in B_{n}}{\sum}\left|c\left(\lambda\right)\right|^{2}\left(\underset{\underset{\lambda\neq\mu}{\lambda,\mu\in B_{n}}}{\sum}\left|\widehat{\mathbbm{1}_{\mathcal{S}}}\left(\mu-\lambda\right)\right|^{2}\right)^{\nicefrac{1}{2}}.
\]
By $\left(3\right)$ we get 
\[
\underset{\underset{\lambda\neq\mu}{\lambda,\mu\in B_{n}}}{\sum}\left|\widehat{\mathbbm{1}_{\mathcal{S}}}\left(\mu-\lambda\right)\right|^{2}=2\underset{\underset{\mu<\lambda}{\lambda,\mu\in B_{n}}}{\sum}\left|\widehat{\mathbbm{1}_{\mathcal{S}}}\left(\mu-\lambda\right)\right|^{2}<2\eps\,,
\]
 hence 
\[
\underset{\mathcal{S}}{\int}\left|\underset{\lambda\in B_{n}}{\sum}c\left(\lambda\right)e^{i\lambda t}\right|^{2}\frac{dt}{2\pi}\geq\left(\left|\mathcal{S}\right|-\left(2\eps\right)^{\nicefrac{1}{2}}\right)\underset{\lambda\in B_{n}}{\sum}\left|c\left(\lambda\right)\right|^{2}\geq\frac{\left|\mathcal{S}\right|}{2}\underset{\lambda\in B_{n}}{\sum}\left|c\left(\lambda\right)\right|^{2}\,.
\]
Fixing some $\eps<\frac{\left|\mathcal{S}\right|^{2}}{8}$, we get the
last inequality holds for infinitely many $n\in\bbN$.
\end{proof}
The next lemma shows how to combine blocks which correspond to different
progressions.
\begin{lem}
Let $\gamma>0$, $\mathcal{S}\subset\bbT$ with $\left|\mathcal{S}\right|>0$,
and $A_{1},A_{2}\subset\bbN$ finite subsets s.t $\gamma$ is a lower
Riesz bound \emph{(}in $L^{2}\left(\mathcal{S}\right)$\emph{)} for
$E\left(A_{j}\right)$, $j=1,2$. Then for any $0<\gamma'<\gamma$
there exists $M\in\bbZ$ s.t. the system $E\left(A_{1}\cup\left(M+A_{2}\right)\right)$
has $\gamma'$ as a lower Riesz bound.\end{lem}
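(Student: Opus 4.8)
The plan is to describe a typical element of the linear span of $E\left(B_{1}\cup\left(M+B_{2}\right)\right)$ in the form $g\left(t\right)+e^{iMt}h\left(t\right)$, where
\[
g\left(t\right)=\sum_{\lambda\in B_{1}}c\left(\lambda\right)e^{i\lambda t},\qquad h\left(t\right)=\sum_{\mu\in B_{2}}d\left(\mu\right)e^{i\mu t},
\]
and, having noted that $B_{1}\cap\left(M+B_{2}\right)=\emptyset$ for all large $M$ (so that $c$ and $d$ may be prescribed independently and the coefficient sequence of the combined system has squared $\ell^{2}$-norm $\sum_{\lambda\in B_{1}}\left|c\left(\lambda\right)\right|^{2}+\sum_{\mu\in B_{2}}\left|d\left(\mu\right)\right|^{2}$), to expand
\[
\norm{g+e^{iMt}h}_{L^{2}\left(\mathcal{S}\right)}^{2}=\norm g_{L^{2}\left(\mathcal{S}\right)}^{2}+\norm h_{L^{2}\left(\mathcal{S}\right)}^{2}+2\,\mathrm{Re}\int_{\mathcal{S}}g\left(t\right)\overline{e^{iMt}h\left(t\right)}\,\frac{dt}{2\pi}\,.
\]

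For the diagonal terms I would invoke the hypothesis directly: $\norm g_{L^{2}\left(\mathcal{S}\right)}^{2}\geq\gamma\sum_{\lambda\in B_{1}}\left|c\left(\lambda\right)\right|^{2}$, and, since multiplication by $e^{iMt}$ is an isometry of $L^{2}\left(\mathcal{S}\right)$, also $\norm{e^{iMt}h}_{L^{2}\left(\mathcal{S}\right)}^{2}=\norm h_{L^{2}\left(\mathcal{S}\right)}^{2}\geq\gamma\sum_{\mu\in B_{2}}\left|d\left(\mu\right)\right|^{2}$. The crux is to show the cross term becomes negligible for $M$ large. Writing it out,
\[
\int_{\mathcal{S}}g\left(t\right)\overline{e^{iMt}h\left(t\right)}\,\frac{dt}{2\pi}=\sum_{\lambda\in B_{1}}\sum_{\mu\in B_{2}}c\left(\lambda\right)\overline{d\left(\mu\right)}\,r_{M}\left(\lambda,\mu\right),\qquad r_{M}\left(\lambda,\mu\right):=\int_{\mathcal{S}}e^{i\left(\lambda-\mu-M\right)t}\,\frac{dt}{2\pi}\,,
\]
and for each fixed $\left(\lambda,\mu\right)\in B_{1}\times B_{2}$ the integer $\mu+M-\lambda$ tends to infinity with $M$, so by the Riemann--Lebesgue lemma (for $\mathbbm{1}_{\mathcal{S}}\in L^{1}\left(\bbT\right)$) we have $r_{M}\left(\lambda,\mu\right)\to0$; since $B_{1}\times B_{2}$ is a fixed finite set, $\eta_{M}:=\left(\sum_{\lambda\in B_{1}}\sum_{\mu\in B_{2}}\left|r_{M}\left(\lambda,\mu\right)\right|^{2}\right)^{1/2}\to0$ as well.

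To conclude, I would apply the Cauchy--Schwarz inequality over the index set $B_{1}\times B_{2}$ to bound the modulus of the cross term by
\[
2\eta_{M}\left(\sum_{\lambda\in B_{1}}\left|c\left(\lambda\right)\right|^{2}\right)^{1/2}\left(\sum_{\mu\in B_{2}}\left|d\left(\mu\right)\right|^{2}\right)^{1/2}\leq\eta_{M}\left(\sum_{\lambda\in B_{1}}\left|c\left(\lambda\right)\right|^{2}+\sum_{\mu\in B_{2}}\left|d\left(\mu\right)\right|^{2}\right),
\]
so that $\norm{g+e^{iMt}h}_{L^{2}\left(\mathcal{S}\right)}^{2}\geq\left(\gamma-\eta_{M}\right)\left(\sum_{\lambda\in B_{1}}\left|c\left(\lambda\right)\right|^{2}+\sum_{\mu\in B_{2}}\left|d\left(\mu\right)\right|^{2}\right)$; it then suffices to fix one $M$ large enough that both $B_{1}\cap\left(M+B_{2}\right)=\emptyset$ and $\eta_{M}<\gamma-\gamma'$. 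I do not expect a genuine obstacle here: the only point that needs care is that the estimate of the cross term hold uniformly over all coefficient vectors $c,d$, and this is precisely what the Cauchy--Schwarz step provides, since it extracts the scalars from a sum over a finite set whose size is independent of $M$, $c$ and $d$.
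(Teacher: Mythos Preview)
Your proof is correct and follows essentially the same route as the paper: the authors simply assert that for large $M$ the polynomials $P_{1}$ and $e^{iMt}P_{2}$ are ``almost orthogonal'' on $\mathcal{S}$, with the $o(1)$ cross term uniform over polynomials of unit $L^{2}(\bbT)$-norm, and your Riemann--Lebesgue plus Cauchy--Schwarz argument is precisely the justification of that uniformity that the paper leaves implicit.
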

\begin{proof}
Denote $P_{j}\left(t\right)=\summ{\lambda\in A_{j}}{}c_{j}\left(\lambda\right)e^{i\lambda t}$,
$j=1,2$. Notice that for sufficiently large $M=M\left(\mathcal{S}\right)$,
the polynomials $P_{1}$ and $e^{iMt}P_{2}$ are $"\mbox{almost orthogonal}"$
on $\mathcal{S}$, meaning 
\[
\rint{\mathcal{S}}{}\left|P_{1}\left(t\right)+e^{iMt}\cdot P_{2}\left(t\right)\right|^{2}\frac{dt}{2\pi}=\norm{P_{1}}_{L^{2}\left(\mathcal{S}\right)}^{2}+\norm{P_{2}}_{L^{2}\left(\mathcal{S}\right)}^{2}+o\left(1\right)\,,
\]
where the last term is uniform w.r. to all polynomials having $\norm P_{L^{2}\left(\bbT\right)}=1$.
\end{proof}
Now we are ready to finish the proof of Theorem 2.\\
Given $\mathcal{S}$ take $\gamma$ from Lemma 7 and denote by $\mathcal{N}$
the set of all natural numbers $n$ for which $\gamma$ is a lower
Riesz bound (in $L^{2}\left(\mathcal{S}\right)$) for $E\left(B_{n}\right)$.
Define 
\[
\Lambda=\underset{n\in\mathcal{N}}{\bigcup}\left(M_{n}+B_{n}\right)\,.
\]
Due to Lemma 8 we can define subsequently for every $n\in\mathcal{N}$,
an integer $M_{n}$ s.t. for any partial union 
\[
\Lambda\left(N\right)=\underset{\underset{n<N}{n\in\mathcal{N}}}{\bigcup}\left(M_{n}+B_{n}\right)\,,\, N\in\mathcal{N}
\]
the corresponding exponential system $E\left(\Lambda\left(N\right)\right)$
has lower Riesz bound $\frac{\gamma}{2}\cdot\left(1+\frac{1}{N}\right)$,
so we get that $E\left(\Lambda\right)$ is a RS in $L^{2}\left(\mathcal{S}\right)$.

\section{Proof of theorem 3}

In order to obtain $\Lambda$ which satisfies property \emph{(i')}
we will require the following result.
\begin{namedthm}
[Theorem A] (\cite{apostol1976introduction}, Thm. 13.12) Let $d\left(n\right)$
denote the number of divisors of an integer $n$.\textup{ }\textup{\emph{Then}}\emph{
${\displaystyle d\left(n\right)=o\left(n^{\eps}\right)}$ for every
$\eps>0$.}
\end{namedthm}
The next lemma will be used to control the contribution of blocks when they are not disjoint.
\begin{lem}
Let $\left\{ a\left(n\right)\right\} _{n\in\bbN}$ a sequence of non-negative
numbers s.t. $\underset{n=1}{\overset{\infty}{\sum}}a\left(n\right)\leq1$.
Then for every $\alpha>1$ there exist $\eps\left(\alpha\right)>0$
and $\nu\left(\alpha\right)\in\bbN$ s.t. for every $N\geq\nu\left(\alpha\right)$
one can find an integer $\ell_{\alpha,N}<N^{\alpha}$ satisfying
\begin{equation}
\underset{n=1}{\overset{N}{\sum}}a\left(n\ell_{\alpha,N}\right)<\frac{1}{N^{1+\varepsilon\left(\alpha\right)}}\,.
\end{equation}
\end{lem}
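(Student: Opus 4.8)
The plan is to average over all admissible steps and let the divisor bound absorb the overlaps between the blocks $\left\{ \ell n:1\le n\le N\right\} $ belonging to different $\ell$. Fix $\alpha>1$, and for a given $N$ let $\ell$ range over the integers $1\le\ell\le K$, where $K=\left\lceil N^{\alpha}\right\rceil -1<N^{\alpha}$; note $K\ge\frac{1}{2}N^{\alpha}$ once $N\ge2$. Regrouping the double sum according to the value of the product $m=\ell n$,
\[
\summ{\ell=1}K\summ{n=1}N a\left(n\ell\right)=\summ{m=1}{KN}a\left(m\right)\cdot\#\left\{ \left(\ell,n\right):1\le\ell\le K,\ 1\le n\le N,\ \ell n=m\right\} ,
\]
and since such a pair $\left(\ell,n\right)$ is determined by the divisor $\ell\mid m$, the multiplicity of each $m$ is at most $d\left(m\right)$. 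Every $m$ appearing here satisfies $m\le KN<N^{1+\alpha}$, so Theorem A, applied in the form $d\left(m\right)\le C\left(\delta\right)m^{\delta}$, gives the uniform estimate $d\left(m\right)\le C\left(\delta\right)N^{\left(1+\alpha\right)\delta}$ throughout that range, and hence
\[
\summ{\ell=1}K\summ{n=1}N a\left(n\ell\right)\le C\left(\delta\right)N^{\left(1+\alpha\right)\delta}\summ{m=1}{\infty}a\left(m\right)\le C\left(\delta\right)N^{\left(1+\alpha\right)\delta}.
\]

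Next I would argue by contradiction: if $\summ{n=1}N a\left(n\ell\right)\ge N^{-\left(1+\eps\right)}$ held for every $\ell\in\left\{ 1,\ldots,K\right\} $, then the left-hand side above would be at least $K\cdot N^{-\left(1+\eps\right)}\ge\frac{1}{2}N^{\alpha-1-\eps}$, so that $N^{\alpha-1-\eps-\left(1+\alpha\right)\delta}\le2C\left(\delta\right)$. It therefore suffices to fix $\eps=\eps\left(\alpha\right)$ and $\delta=\delta\left(\alpha\right)$ small enough that this exponent is strictly positive; for instance $\eps\left(\alpha\right)=\frac{\alpha-1}{4}$ and $\delta\left(\alpha\right)=\frac{\alpha-1}{4\left(1+\alpha\right)}$ give $\alpha-1-\eps-\left(1+\alpha\right)\delta=\frac{\alpha-1}{2}>0$. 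With these choices the inequality $N^{\left(\alpha-1\right)/2}\le2C\left(\delta\left(\alpha\right)\right)$ fails for every $N$ beyond some threshold $\nu\left(\alpha\right)$, and for each such $N$ there must then exist an integer $\ell_{\alpha,N}\in\left\{ 1,\ldots,K\right\} $, hence $\ell_{\alpha,N}<N^{\alpha}$, with $\summ{n=1}N a\left(n\ell_{\alpha,N}\right)<N^{-\left(1+\eps\left(\alpha\right)\right)}$, which is exactly the asserted inequality.

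The only point that calls for a word of care rather than a routine computation is the passage from the asymptotic statement $d\left(m\right)=o\left(m^{\delta}\right)$ of Theorem A to an inequality $d\left(m\right)\le C\left(\delta\right)m^{\delta}$ valid for \emph{all} $m$: this is legitimate because a sequence tending to $0$ is bounded, so one may take $C\left(\delta\right)=\sup_{m\ge1}d\left(m\right)/m^{\delta}<\infty$, and it is this uniform form that is used for every $m\le KN$ in the double sum. Beyond this I foresee no real difficulty; in contrast to the disjoint situation of Lemma 5, the blocks $\left\{ \ell n:1\le n\le N\right\} $ here genuinely overlap, and the divisor bound is precisely the tool that quantifies the overlap and shows it is harmless once the three exponents $\alpha-1$, $\eps$ and $\left(1+\alpha\right)\delta$ are balanced.
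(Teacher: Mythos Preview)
Your proof is correct and follows essentially the same approach as the paper: sum over all candidate steps $\ell\le L$ (your $K$), regroup by the product $m=n\ell$, bound the multiplicity by $d(m)$ via Theorem~A, and then pigeonhole to extract a good $\ell$. The only differences are cosmetic --- you fix $K\approx N^{\alpha}$ at the outset and supply explicit choices of $\eps(\alpha)$ and $\delta(\alpha)$, and you are more careful than the paper in passing from $d(m)=o(m^{\delta})$ to a uniform bound $d(m)\le C(\delta)m^{\delta}$.
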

\begin{proof}
Fix $\alpha>1$ and apply Theorem A with $\eps$ small enough, depending
on $\alpha$, to be chosen later. We get the inequality $d\left(k\right)<k^{\varepsilon}$
holds for every $k\geq\nu\left(\alpha\right)$. Fix $N\geq\nu\left(\alpha\right)$,
and notice that for every $L\in\bbN$ 
\[
\underset{\ell=1}{\overset{L}{\sum}}\underset{n=1}{\overset{N}{\sum}}a\left(n\ell\right)\leq\underset{k=1}{\overset{LN}{\sum}}d\left(k\right)a\left(k\right)<\left(LN\right)^{\varepsilon}\,.
\]
It follows that there exists an integer $0<\ell<L$ s.t. 
\[
\underset{n=1}{\overset{N}{\sum}}a\left(n\ell\right)<\frac{\left(LN\right)^{\varepsilon}}{L}=\frac{N^{\varepsilon}}{L^{1-\varepsilon}}\,.
\]
In order to get $\left(4\right)$ we ask 
\[
\frac{N^{\varepsilon}}{L^{1-\varepsilon}}<\frac{1}{N^{1+\varepsilon}}\,,
\]
which yields 
\[
N^{\frac{1+2\eps}{1-\eps}}<L\,,
\]
Therefore, choosing $\eps=\eps\left(\alpha\right)$ sufficiently small
we see that $L$ may be chosen to be smaller than $N^{\alpha}$.
\end{proof}
Setting
\[
B_{\alpha,N}:=\left\{ \ell_{\alpha,N},\,2\ell_{\alpha,N},\ldots,N\ell_{\alpha,N}\right\} \,,
\]
we get
\begin{cor}
Let $\left\{ a\left(n\right)\right\} _{n\in\bbN}$ be as in Lemma 9. For every $\alpha>1$ and $N\geq\nu\left(\alpha\right)$ 
\begin{equation}
\underset{\underset{\mu<\lambda}{\lambda,\mu\in B_{\alpha,N}}}{\sum}a\left(\lambda-\mu\right)<\frac{1}{N^{\varepsilon\left(\alpha\right)}}\,.
\end{equation}

\end{cor}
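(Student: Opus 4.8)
The plan is to reproduce, almost verbatim, the passage by which Corollary~5 was deduced from Lemma~5, but now starting from the block $B_{\alpha,N}$ furnished by Lemma~9 instead of the block $B_n$ furnished by Lemma~5. First I would record the arithmetic structure of $B_{\alpha,N}$: by its very definition every element is of the form $k\cdot\ell_{\alpha,N}$ with $1\le k\le N$, so any pair $\mu<\lambda$ in $B_{\alpha,N}$ can be written $\lambda=k\,\ell_{\alpha,N}$, $\mu=k'\,\ell_{\alpha,N}$ with $1\le k'<k\le N$, and therefore $\lambda-\mu=j\cdot\ell_{\alpha,N}$ for some $j\in\left\{1,\ldots,N-1\right\}$. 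For a fixed such $j$ the number of pairs realizing it is precisely $N-j$ (namely $k'\in\left\{1,\ldots,N-j\right\}$, $k=k'+j$), so
\[
\underset{\underset{\mu<\lambda}{\lambda,\mu\in B_{\alpha,N}}}{\sum}a\left(\lambda-\mu\right)=\summ{j=1}{N-1}\left(N-j\right)\cdot a\left(j\cdot\ell_{\alpha,N}\right)\,.
\]

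Next I would use $N-j\le N$ together with the non-negativity of the $a(n)$ to extend the range of summation, obtaining the upper bound $N\cdot\summ{j=1}{N}a\left(j\cdot\ell_{\alpha,N}\right)$. Finally I would invoke inequality $\left(4\right)$ of Lemma~9 --- which is valid precisely in the range $N\ge\nu\left(\alpha\right)$ appearing in the statement --- to conclude that this last quantity is $<N\cdot N^{-1-\varepsilon\left(\alpha\right)}=1/N^{\varepsilon\left(\alpha\right)}$, which is the asserted bound $\left(5\right)$.

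I do not anticipate any genuine obstacle here: all the arithmetic content has already been concentrated into Lemma~9, and what is left is only the combinatorial bookkeeping of how many pairs $\mu<\lambda$ from an $N$-term arithmetic block produce each admissible difference --- exactly the computation performed for Corollary~5. The one point worth a word of care is that $\ell_{\alpha,N}$, and hence the block $B_{\alpha,N}$ and the whole estimate, genuinely depend on $N$ (and not merely on $\alpha$), so the statement must be read for each fixed $N\ge\nu\left(\alpha\right)$ separately; since Lemma~9 already produces an admissible value $\ell_{\alpha,N}<N^{\alpha}$ for every such $N$, this causes no trouble, and in particular the single block $B_{\alpha,N}$ is all that is needed here.
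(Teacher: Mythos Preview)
Your proposal is correct and matches the paper's own approach exactly: the paper's proof consists of the single line ``The proof is identical to that of Corollary 6,'' and what you have written is precisely that argument spelled out --- rewrite the sum over pairs as $\sum_{j=1}^{N-1}(N-j)\,a(j\cdot\ell_{\alpha,N})$, bound by $N\sum_{j=1}^{N}a(j\cdot\ell_{\alpha,N})$, and then invoke inequality~(4) of Lemma~9 to get $N\cdot N^{-1-\varepsilon(\alpha)}=N^{-\varepsilon(\alpha)}$.
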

The proof is identical to that of Corollary 6.

We combine our estimates.
\begin{lem}
Given $\mathcal{S}\subset\bbT$ of positive measure, there exist a
constant $\gamma=\gamma\left(\mathcal{S}\right)>0$ s.t. for every
$\alpha>1$ there exists $N\left(\alpha\right)\in\bbN$ for which
the following holds: For every integer $N\geq N\left(\alpha\right)$
one can find $\ell_{\alpha,N}\in\bbN$ satisfying $\ell_{\alpha,N}<N^{\alpha}$
and $\gamma$ is a lower Riesz bound \emph{(}in $L^{2}\left(\mathcal{S}\right)$\emph{)}
for $E\left(B_{\alpha,N}\right)$.\end{lem}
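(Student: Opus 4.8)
The plan is to follow the proof of Lemma 7 (the lower Riesz bound for $E(B_{n})$) almost verbatim, replacing its combinatorial input, Corollary 6, by Corollary 10, and then to verify that the lower bound thus obtained can be taken to be $\gamma=|\mathcal{S}|/2$, which does not depend on $\alpha$.

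First I would apply Corollary 10 to the sequence $\left\{ a(n)\right\} _{n\in\bbN}:=\bigl\{ |\widehat{\mathbbm{1}_{\mathcal{S}}}(n)|^{2}\bigr\}_{n\in\bbN}$. This is legitimate since, by Parseval's identity, $\summ{n\in\bbZ}{}|\widehat{\mathbbm{1}_{\mathcal{S}}}(n)|^{2}=\norm{\mathbbm{1}_{\mathcal{S}}}_{L^{2}(\bbT)}^{2}=|\mathcal{S}|\leq1$, so in particular $\summ{n=1}{\infty}a(n)\leq1$. Corollary 10 then furnishes $\eps(\alpha)>0$ and $\nu(\alpha)\in\bbN$ such that for every $N\geq\nu(\alpha)$ one can choose an integer $\ell_{\alpha,N}<N^{\alpha}$ for which, with $B_{\alpha,N}=\left\{ \ell_{\alpha,N},2\ell_{\alpha,N},\ldots,N\ell_{\alpha,N}\right\}$,
\[
\underset{\underset{\mu<\lambda}{\lambda,\mu\in B_{\alpha,N}}}{\sum}\bigl|\widehat{\mathbbm{1}_{\mathcal{S}}}(\lambda-\mu)\bigr|^{2}<\frac{1}{N^{\eps(\alpha)}}\,.
\]

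Next I would repeat the computation from the proof of Lemma 7. For a finite sequence of scalars $\left\{ c(\lambda)\right\} _{\lambda\in B_{\alpha,N}}$ one expands $\int_{\mathcal{S}}\bigl|\summ{\lambda\in B_{\alpha,N}}{}c(\lambda)e^{i\lambda t}\bigr|^{2}\frac{dt}{2\pi}$ as the diagonal contribution $|\mathcal{S}|\summ{\lambda\in B_{\alpha,N}}{}|c(\lambda)|^{2}$ plus an off-diagonal sum $\sum_{\lambda\neq\mu}c(\lambda)\overline{c(\mu)}\widehat{\mathbbm{1}_{\mathcal{S}}}(\mu-\lambda)$; by Cauchy--Schwarz, using $|\widehat{\mathbbm{1}_{\mathcal{S}}}(-n)|=|\widehat{\mathbbm{1}_{\mathcal{S}}}(n)|$ and the displayed bound, the off-diagonal sum is at most $\summ{\lambda\in B_{\alpha,N}}{}|c(\lambda)|^{2}$ times $\bigl(2\sum_{\mu<\lambda}|\widehat{\mathbbm{1}_{\mathcal{S}}}(\lambda-\mu)|^{2}\bigr)^{\nicefrac{1}{2}}<\sqrt{2}\,N^{-\eps(\alpha)/2}$. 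Hence $\int_{\mathcal{S}}\bigl|\summ{\lambda\in B_{\alpha,N}}{}c(\lambda)e^{i\lambda t}\bigr|^{2}\frac{dt}{2\pi}\geq\bigl(|\mathcal{S}|-\sqrt{2}\,N^{-\eps(\alpha)/2}\bigr)\summ{\lambda\in B_{\alpha,N}}{}|c(\lambda)|^{2}$. Finally, since $N^{-\eps(\alpha)/2}\to0$ as $N\to\infty$ for each fixed $\alpha$, I would choose $N(\alpha)\geq\nu(\alpha)$ so large that $\sqrt{2}\,N^{-\eps(\alpha)/2}\leq|\mathcal{S}|/2$ whenever $N\geq N(\alpha)$; for such $N$ we get that $\gamma:=|\mathcal{S}|/2$ is a lower Riesz bound in $L^{2}(\mathcal{S})$ for $E(B_{\alpha,N})$, while $\ell_{\alpha,N}<N^{\alpha}$, as required.

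The computation itself is routine; the only point that needs attention — and the sole difference from the situation in Lemma 7 — is that the error term $N^{-\eps(\alpha)}$ here deteriorates as $\alpha\to1^{+}$, because $\eps(\alpha)\to0$. One therefore cannot fix a single $N$ and instead lets the threshold $N(\alpha)$ absorb this poor $\alpha$-dependence; crucially, this does not affect the Riesz bound $\gamma$, which remains $|\mathcal{S}|/2$ uniformly in $\alpha$ and $N$ — exactly the quantifier order demanded by the statement. I do not foresee any deeper obstacle.
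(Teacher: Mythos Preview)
Your proposal is correct and follows the paper's proof essentially verbatim: apply Corollary~10 to $a(n)=|\widehat{\mathbbm{1}_{\mathcal{S}}}(n)|^{2}$, reuse the Cauchy--Schwarz estimate from Lemma~7 to obtain the lower bound $\bigl(|\mathcal{S}|-C\,N^{-\eps(\alpha)/2}\bigr)\sum|c(\lambda)|^{2}$, and then choose $N(\alpha)$ large enough to make this at least $\tfrac{|\mathcal{S}|}{2}\sum|c(\lambda)|^{2}$. Your added remarks (the Parseval justification and the observation that only $N(\alpha)$, not $\gamma$, absorbs the $\alpha$-dependence) are accurate and make explicit what the paper leaves implicit.
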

\begin{proof}
Let $\mathcal{S}\subset\bbT,$ with $\left|\mathcal{S}\right|>0$.
We fix $\alpha>1$ and apply corollary 10 to the sequence $\left\{ a\left(n\right)\right\} _{n\in\bbN}:=\left\{ \left|\widehat{\mathbbm{1_{\mathcal{S}}}}\left(n\right)\right|^{2}\right\} _{n\in\bbN}$,
we get $\eps\left(\alpha\right)$ and for every $N\geq\nu\left(\alpha\right)$
a positive integer $\ell_{\alpha,N}<N^{\alpha}$ satisfying $\left(5\right)$.
Proceeding as in the proof of Lemma 7 we get
\[
\underset{\mathcal{S}}{\int}\left|\underset{\lambda\in B_{\alpha,N}}{\sum}c\left(\lambda\right)e^{i\lambda t}\right|^{2}dt\geq\left(\left|\mathcal{S}\right|-\frac{C}{N^{\nicefrac{\eps\left(\alpha\right)}{2}}}\right)\underset{\lambda\in B_{\alpha,N}}{\sum}\left|c\left(\lambda\right)\right|^{2}\geq\frac{\left|\mathcal{S}\right|}{2}\underset{\lambda\in B_{\alpha,N}}{\sum}\left|c\left(\lambda\right)\right|^{2}\,,
\]
 where last inequality holds for all $N\geq N\left(\alpha\right)$.
\end{proof}
For the last step of the proof we will use a diagonal process.\\
Given $\mathcal{S}$ find $\gamma$ using Lemma 11. This provides,
for every $\alpha>1$ and every $N\geq N\left(\alpha\right)$, a block
$B_{\alpha,N}$ s.t. $\gamma$ is a lower Riesz bound (in $L^{2}\left(\mathcal{S}\right)$)
for $E\left(B_{\alpha,N}\right)$. Let $\alpha_{k}\rightarrow1$ be
a decreasing sequence. Define 
\[
\Lambda=\underset{k\in\bbN}{\bigcup}\underset{N=N\left(\alpha_{k}\right)}{\overset{N\left(\alpha_{k+1}\right)-1}{\bigcup}}\left(M_{N}+B_{\alpha_{k},N}\right)\,.
\]
Again, by Lemma 8, we can make sure any partial union has lower Riesz
bound not smaller than $\frac{\gamma}{2}$, and so $E\left(\Lambda\right)$
is a RS in $L^{2}\left(\mathcal{S}\right)$.

It follows directly from the construction that for every $N\in\bbN$
$\Lambda$ contains an arithmetic progression of length $N$ and step
$\ell<C\left(\alpha\right)N^{\alpha}$, for any $\alpha>1$,
as required.


\end{document}